 \newcommand{\mymarginpar}[1]{%
    \marginpar{\ifthenelse{\isodd{\arabic{page}}}{\flushleft 
#1}{\flushright #1}}}
 \renewcommand{\phi}{\varphi}
 \newcommand{\eps}{\varepsilon} 
 \newcommand{\IN}{\mathbb{N}}                  
 \newcommand{\IQ}{\mathbb{Q}}                  
 \newcommand{\IR}{\mathbb{R}}
\newcommand{\CC}{\mathcal{C}}
 \theoremstyle{plain} 
 \newtheorem{Theorem}{Theorem}[section]
 \newtheorem{Lemma}[Theorem]{Lemma}
 \newtheorem{Proposition}[Theorem]{Proposition}
 \newtheorem{Corollary}[Theorem]{Corollary}
 \theoremstyle{definition} 
 \newtheorem{Definition}[Theorem]{Definition}
 \newtheorem{Remark}[Theorem]{Remark}
 \newtheorem{Example}[Theorem]{Example}
\newtheorem{Question}[Theorem]{Question}
\begin{document}

\title{W-measurable sensitivity of semigroup actions}

\author[F. Bozgan]{Francisc Bozgan}
\address{(Francisc Bozgan) Department of Mathematics, University of Chicago, Chicago, IL 60637, USA}
\email{fbozgan@math.uchicago.edu}

\author[A. Sanchez]{Anthony Sanchez}
\address{(Anthony Sanchez) Department of Mathematics, University of Washington, Box 354350, Seattle, WA, 98195-4530}
\email{asanch33@uw.edu}

\author[C.E. Silva]{Cesar E. Silva}
\address{(Cesar E. Silva) Department of Mathematics, Williams College, Williamstown, MA 01267, USA}
\email{csilva@williams.edu}

\author[J. Spielberg]{Jack Spielberg}
\address{(Jack Spielberg) School of Mathematical and Statistical Sciences, Arizona State University, Tempe, AZ 85287-1804, USA}
\email{jack.spielberg@asu.edu}

\author[D. Stevens]{David Stevens}
\address{(David Stevens)  Williams College, Williamstown, MA 01267, USA}
\email{davidfstevens14@gmail.com}

\author[J. Wang]{Jane Wang}
\address{(Jane Wang) Department of Mathematics, Massachusetts Institute of Technology, Cambridge, MA, 02138}
\email{janeyw@mit.edu}

\subjclass[2010]{Primary 37A40; Secondary 37A05, 20M99\\
\emph{Key words and phrases:} measure-preserving, nonsingular transformation, ergodic, sensitive
dependence}


\begin{abstract}
This paper studies the notion of W-measurable sensitivity in the context of countable semigroup actions. W-measurable sensitivity is a measurable generalization of sensitive dependence on initial conditions. In 2012, Grigoriev et. ~al.~  proved a classification result of conservative ergodic nonsingular dynamical systems that states all are either W-measurably sensitive or act by isometries with respect to some metric and have refined structure.  We generalize this result to a class of semigroup actions.  Furthermore, a counterexample is provided showing that W-measurable sensitivity is not preserved under factors. We also consider the restriction of W-measurably sensitive semigroup actions to sub-semigroups and show that the restriction remains W-measurably sensitive when the sub-semigroup is large enough (e.g. when the sub-semigroups are syndetic or thick).
\end{abstract}

\maketitle


\section{Introduction} 
Sensitive dependence on initial conditions has been a central notion in topological dynamical systems since being introduced formally by Guckenheimer in \cite{MR553966}, although these ideas go back to the work of Lorenz in the 1960's. Guckenheimer defined a transformation $T$ on a metric space $(X, d)$ to have \textit{sensitive dependence on initial conditions} if there exists a $\delta > 0$ such that for every $\varepsilon > 0$ and all $x \in X$, there exists a $y \in B (x, \varepsilon)$ and $n \in \IN$ such that $d(T^n(x), T^n(y)) > \delta$. The transformation $T$ defines a semigroup action of $\IN$ on $(X,d)$. Several generalizations of sensitivity to the context of other semigroup actions on metric spaces have been studied, for example in \cite{MR2367162}, \cite{MR2644895}, and \cite{MR2885999}. A different route of generalization that has been considered have been measure-theoretic versions of sensitive dependence on measure spaces (i.e. $\IN$-actions). Examples include \cite{MR1880515}, \cite{MR2154050}, and\cite{MR2811160} in the finite measure-preserving case and \cite{MR2993055}, \cite{MR2901201}, \cite{MR3326024}, and \cite{MR2415039} in the infinite measure and nonsingular case. In an unpublished manuscript that is in progress, S. Heinecke, E. Wickstrom and the third author, consider the equivalence of the various notions of measurable sensitivity for actions of $\mathbb N^d$.

In this paper we begin bridging the gap between these two generalizations by considering a measurable version of sensitive dependence on initial conditions in the context of semigroup actions. In particular a classification theorem of a measurable version of sensitivity, called W-measurable sensitivity, was proven in \cite{MR2901201}. That theorem stated roughly that a conservative, ergodic, nonsingular dynamical system ($\IN$-action) is either W-measurably sensitive or isomorphic modulo a measure zero set to an isometry. We will prove a generalization of this theorem for semigroup actions. We will do this by considering a class of countable semigroups that can be equipped with a partial order which will give us a notion of escaping to infinity as well as allowing us to generalize key lemmas from \cite{MR2901201}. To the best of our knowledge, this paper is the first attempt of trying to understand a measurable version of sensitive dependence on initial conditions to a large class of semigroup actions.

In section \ref{Restriction of Semigroup actions and W-Measurable Sensitivity}, we will consider the restriction of W-measurably sensitive semigroup actions to sub-semigroups and ask when is the restriction W-measurably sensitive. We will prove two distinct classes of sub-semigroups preserve W-measurable sensitivity in the conservative ergodic case: syndetic sub-semigroups and thick sub-semigroups. A corollary that syndetic sub-semigroups preserve W-measurable sensitivity will be that powers of a classical dynamical system ($\IN$-action) exhibit W-measurable sensitivity. We remark that studying the restriction to sub-semigroups and asking if W-measurable sensitivity is preserved is one way of understanding the set of semigroup elements for which separation of points is occurring and thus, natural to study.

\subsection{Organization}

The organization of the paper is as follows. In section \ref{Preliminary Definitions} we define the class of semigroup actions we consider and provide some preliminary definitions of standard dynamical notions in this context. In section \ref{Dynamical consequences of mu-compatibility} we discuss metrics that are compatible with our measurable systems and prove dynamical consequences of the metrics we consider.  The results proven in this section are key tools to the proof of our main classification result.  In Section \ref{W-measurable sensitivity} we define W-measurable sensitivity of semigroup actions and in section \ref{The Classification Theorem} we use the tools developed in previous sections to prove our main classification result that roughly states that under certain conditions, a measurable semigroup action is either W-measurably sensitive or is acting by isometries. This is the main dichotomy that generalizes the results of \cite{MR2901201}. In section, \ref{Restriction of Semigroup actions and W-Measurable Sensitivity}, we derive some hypotheses under which the restriction of a W-measurably sensitive semigroup action to a sub-semigroup continues to be W-measurably sensitive. Finally, in section \ref{Closing} we outline some questions and directions for future research.

\section{Preliminary Definitions}\label{Preliminary Definitions}

Let $(X,\mu)$ be a standard non-atomic Lebesgue space, and let $G$ be a countable abelian semigroup.  We consider a \textit{nonsingular dynamical system}, denoted $(X,\mu,G,T)$, which consists of a homomorphism $T$ from $G$ to the semigroup of nonsingular endomorphisms of $(X,\mu)$.  Thus, for each $g \in G$, $T_g : X \to X$ is measurable, and for any measurable set $A \subseteq X$, $T_g^{-1}(A)$ is a null set if and only if $A$ is a null set.  Moreover, $T_g \circ T_h = T_{g + h}$ for any $g$, $h \in G$. Occasionally we may assume that the action is measure-preserving (i.e. for every measurable set $A\subseteq X$ and any $g\in G$, $\mu(T_g ^{-1} A) = \mu (A)$) or that the measure space is finite. A subset $A \subseteq X$ is \textit{positively invariant (under $T$)} if $T_g(A) \subseteq A$ for all $g \in G$  (equivalently, $A \subseteq T_g^{-1}(A)$ for all $g \in G$).  We say $A$ is \textit{negatively invariant (under $T$)} if $T_g^{-1}(A) \subseteq A$ for all $g \in G$.  $A$ is \textit{invariant (under $T$)} if it is both positively and negatively invariant (equivalently, $T_g^{-1}(A) = A$ for all $g \in G$). We say a nonsingular action is \textit{ergodic} if every invariant measurable set is null or co-null.

We assume throughout that $G$ is a countable abelian \emph{cancellative} semigroup with \emph{no inverses}. A semigroup $G$ satisfies cancellation if $g + h = g + k$ implies $h = k$, and has no inverses if $g + h = 0$ implies $g = h = 0$.  For example, if $G$ is a pointed sub-semigroup of an abelian group, then $G$ has these properties.  In this case,  $G$ is equipped with a partial order by setting $g \le h$ if $h = g + x$ for some $x \in G$. For example, $G=\IN^2$ with the lexicographic order. Moreover, using the partial order we have a natural sense of what it means for a sequence of semigroup elements to go to infinity. We say $(g_i)\subseteq  G$ \emph{goes to infinity} (in the partial order of $G$) if for every $h\in G$, there is an $i_0\in \IN$ with $h\le g_i$ for all $i\ge i_0$. We denote this by $g_i\to\infty$.  While the conditions on $G$ appear to exclude any actual group, we recall that an abelian cancellative semigroup is embeddable in a group (see \cite{MR0218472}). So we can define $g_i\to\infty$ for a group that arises as an abelian cancellative semigroup, by asking that the sequence lives in the semigroup and that $g_i\to\infty$ in the semigroup. For simplicity, we continue to focus on abelian cancellative semigroup actions with no inverses.

We will endow our measure spaces with $\mu$-\emph{compatible metrics} (i.e.~ metrics for which non-empty open balls have positive measure). We assume throughout that the metrics are Borel measurable and are bounded by 1 (which can always be achieved by replacing a metric $d$ with $\frac{d}{1+d}$). By \cite{MR2415039} the topology generated by a $\mu$-compatible metric is separable. Hence, open sets are measurable as they are a countable union of balls. Of particular interest will be the metric $d_G$ defined by
$$d_G(x,y):=\sup _{g\in G }d(T_g x, T_g y)=\sup _{g\ge 0 }d(T_g x, T_g y)$$
for $x,y\in X.$ Since we only consider metrics that are bounded above by 1, then $d_G$ is always finite. As we consider multiple metrics, we will use the notation $B ^d(x,\varepsilon)$ for an open ball about $x$ of radius $\varepsilon$ to highlight the dependence on $d$.

\begin{Remark}\label{rem_motivation}

There are various measurable and topological properties of the dynamics of a single transformation that have different generalizations to the action of a semigroup of transformations.  The point is that for the semigroup $\IN$, a subset of $\IN$ is \textit{cofinal} for the partial order on $\IN$ if and only if it is an \textit{infinite} subset of $\IN$.  These two notions are no longer the same for general semigroups.  We will use the prefix ``$G$-'' to indicate properties defined by the notion of cofinality.

As an example, recall that a point $x \in X$ is called \textit{transitive} if the orbit of $x$ under $G$ is dense in $X$: $ \overline{\{T_g x : g \in G \}} = X$.  In the case $G = \IN$, i.e. iterates of a single transformation, this is equivalent to requiring that for each $n \in \IN$, $\{T_k x : k \ge n \}$ is dense in $X$.  For more general semigroups such as $\IN^d$, the two notions are not necessarily the same.  

\end{Remark}

With this remark in mind we make the following definitions.

\begin{Definition}

Let $(X,\mu,G,T)$ be a nonsingular abelian dynamical system.

\begin{enumerate}

\item $T$ is \textit{$G$-conservative} if for every measurable set $A$ of positive measure and every $g \in G$, there is $h \in G$ with $h \ge g$ such that $T_h^{-1}(A) \cap A$ has positive measure.

\end{enumerate}

Let $d$ be a $\mu$-compatible metric on $X$.

\begin{enumerate}
\addtocounter{enumi}{1}

\item A point $x \in X$ is \textit{$G$-transitive} if for each $g \in G$, the set $\{T_h x : h \ge g \}$ is dense in $X$.  We say that $T$ is \textit{$G$-transitive} if there exists a $G$-transitive point in $X$.

\item $T$ is \textit{$G$-uniformly rigid} if there is a sequence $g_i \in G$ such that $g_i \to \infty$ (in the partial order of $G$), and such that $T_{g_i} x \to x$ uniformly on $X$. 
\end{enumerate}

\end{Definition}

We close this section by recalling some standard notions in the semigroup setting that will be useful in later sections.

\begin{Lemma} \label{lem_invariantset}

Let $(X,\mu,G,T)$ be a nonsingular G-conservative action.  Let $A$ be a negatively invariant measurable set of positive measure.  Then there is an invariant measurable set $B \subseteq A$ such that $A \setminus B$ is a null set.

\end{Lemma}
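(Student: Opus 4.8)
The plan is to produce the invariant set explicitly as the intersection of all backward images of $A$, and to invoke $G$-conservativity only at the very end to control the size of what is left over. Concretely I would set
$$B := \bigcap_{g \in G} T_g^{-1}(A),$$
which is measurable since $G$ is countable, and which satisfies $B \subseteq A$ because $T_0 = \mathrm{id}$ puts $T_0^{-1}(A) = A$ into the intersection. The first fact to record is the order-reversing behavior of $g \mapsto T_g^{-1}(A)$: if $g \le h$, write $h = g + x$ and combine $T_h^{-1}(A) = T_g^{-1}(T_x^{-1}(A))$ with the negative invariance $T_x^{-1}(A) \subseteq A$ to get $T_h^{-1}(A) \subseteq T_g^{-1}(A)$.

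First I would show that $B$ is invariant, in fact exactly and not merely modulo null sets. Using that $G$ is abelian, compute $T_h^{-1}(B) = \bigcap_{g \in G} T_{g+h}^{-1}(A) = \bigcap_{g' \in h + G} T_{g'}^{-1}(A)$. The inclusion $T_h^{-1}(B) \supseteq B$ is immediate from $h + G \subseteq G$. For the reverse inclusion, take $x \in T_h^{-1}(B)$ and an arbitrary $g \in G$; then $g + h \in h + G$, so $x \in T_{g+h}^{-1}(A) \subseteq T_g^{-1}(A)$ by the order-reversing property and $g + h \ge g$. As $g$ was arbitrary, $x \in B$, so $T_h^{-1}(B) = B$ for all $h$. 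Note this step uses only negative invariance and the cancellative abelian structure, not conservativity.

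The remaining task is to prove $\mu(A \setminus B) = 0$. Since $A \setminus B = \bigcup_{g \in G}\bigl(A \setminus T_g^{-1}(A)\bigr)$ is a countable union, it suffices to show each wandering piece $W_g := A \setminus T_g^{-1}(A)$ is null. The key computation, which I expect to be the main obstacle, is to verify that $W_g$ is genuinely wandering along the upper set $\{h : h \ge g\}$: for $h \ge g$,
$$T_h^{-1}(W_g) \cap W_g = T_h^{-1}(A) \cap \bigl(T_{g+h}^{-1}(A)\bigr)^c \cap \bigl(T_g^{-1}(A)\bigr)^c = \emptyset,$$
where the first equality uses $T_h^{-1}(A) \cap A = T_h^{-1}(A)$ from negative invariance, and the vanishing uses $T_h^{-1}(A) \subseteq T_g^{-1}(A)$ from $h \ge g$. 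The subtle point is that this emptiness holds precisely on the cofinal set $\{h \ge g\}$, which is exactly the range over which $G$-conservativity quantifies. Hence if $\mu(W_g) > 0$, applying $G$-conservativity to the set $W_g$ and the element $g$ would yield some $h \ge g$ with $\mu\bigl(T_h^{-1}(W_g) \cap W_g\bigr) > 0$, contradicting the emptiness above. Therefore $\mu(W_g) = 0$ for every $g$, so $\mu(A \setminus B) = 0$, and $B$ is the desired invariant set. The one place where care is needed is aligning the ``$h \ge g$'' quantifier in the definition of $G$-conservativity with the range on which $W_g$ wanders; choosing the wandering sets to be indexed by a single fixed $g$ (rather than trying to wander over all of $G$ at once) is what makes this alignment work.
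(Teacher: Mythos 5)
Your proof is correct and follows essentially the same route as the paper's: you take $B = \bigcap_{g \in G} T_g^{-1}(A)$, show each piece $A \setminus T_g^{-1}(A)$ is wandering along $\{h : h \ge g\}$ (via negative invariance) and hence null by $G$-conservativity, and verify exact invariance of $B$ pointwise. The only differences are cosmetic — you prove invariance before nullity and justify $B \subseteq A$ via $T_0 = \mathrm{id}$ rather than via negative invariance — neither of which changes the argument.
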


\begin{proof}
Let $A$ be negatively invariant with positive measure.  We will first show that for each $g \in G$, $A \setminus T_g^{-1}(A)$ is a null set.  To do this, we will show that for each $h \ge g$ we have
\[
T_h^{-1}\bigl(A \setminus T_g^{-1}(A)\bigr) \cap \bigl(A \setminus T_g^{-1}(A)\bigr) = \emptyset.
\]
Then by $G$-conservativity, it will follow that $A \setminus T_g^{-1}(A)$ is a null set.  Let $h = g + k$.  Then
\[
T_h^{-1}(A \setminus T_g^{-1}(A)) = T_g^{-1}\bigl[ T_k^{-1}(A \setminus T_g^{-1}(A)) \bigr]
\subseteq T_g^{-1}\bigl( T_k^{-1}(A) \bigr) \subseteq T_g^{-1}(A),
\]
which is disjoint from $A \setminus T_g^{-1}(A)$.

Now let $B = \bigcap_{g \in G} T_g^{-1}(A)$.  Then $A \setminus B = \bigcup_{g \in G} (A \setminus T_g^{-1}(A))$ is a null set.  We must show that $B$ is invariant.  Let $h \in G$.  We have
\[
T_h^{-1}(B) = \bigcap_{g \in G} T_{h + g}^{-1}(A) = \bigcap_{k \ge h} T_k^{-1}(A) \supseteq \bigcap_{k \in G} T_k^{-1}(A) = B.
\]
Conversely, let $x \in T_h^{-1}(B)$.  Then $T_h x \in B$. Then for all $g \in G$, $T_hx \in T_g^{-1}(A)$, hence $T_g T_h x \in A$, hence $T_g x \in T_h^{-1}(A) \subseteq A$.  Thus $x \in \bigcap_{g \in G} T_g^{-1}(A) = B$.
\end{proof}

\begin{Corollary}\label{lem_posinv}

Let $(X,\mu,G,T)$ be a nonsingular ergodic G-conservative action. If $A$ is a positively invariant set of positive measure, then $A$ is co-null.

\end{Corollary}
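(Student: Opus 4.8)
The plan is to reduce to the already-established negatively invariant case of Lemma~\ref{lem_invariantset} by passing to the complement. The key observation is that positive and negative invariance are dual under complementation: if $A$ is positively invariant, then $A \subseteq T_g^{-1}(A)$ for every $g \in G$, and taking complements gives $T_g^{-1}(X \setminus A) = X \setminus T_g^{-1}(A) \subseteq X \setminus A$, so that $A^c := X \setminus A$ is negatively invariant.

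First I would argue by contradiction, assuming that $A$ is not co-null, so that $A^c$ has positive measure. Since $A^c$ is negatively invariant and of positive measure, and the system is $G$-conservative, Lemma~\ref{lem_invariantset} applies and produces an invariant measurable set $B \subseteq A^c$ with $A^c \setminus B$ a null set. In particular $\mu(B) = \mu(A^c) > 0$.

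Finally, because $B$ is invariant and the system is ergodic, $B$ must be null or co-null; since $\mu(B) > 0$, it is co-null. But $B \subseteq A^c$ forces $A \subseteq X \setminus B$, whence $\mu(A) \le \mu(X \setminus B) = 0$, contradicting the hypothesis that $A$ has positive measure. Therefore $A^c$ is null and $A$ is co-null.

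I do not expect any serious obstacle here: the only genuinely new ingredient is the complementation identity turning positive invariance into negative invariance, after which Lemma~\ref{lem_invariantset} and ergodicity do all the work. The one point to keep straight is the bookkeeping of measures—ensuring $\mu(B) > 0$ so that ergodicity yields co-null rather than null—but this is immediate, since $A^c \setminus B$ is null while $\mu(A^c) > 0$.
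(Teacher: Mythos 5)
Your proof is correct and follows essentially the same route as the paper: pass to the complement $A^c$, which is negatively invariant, apply Lemma~\ref{lem_invariantset} to extract an invariant set $B \subseteq A^c$ of positive measure, and use ergodicity to force $B$ (hence $A^c$) to be co-null, contradicting $\mu(A) > 0$. The only difference is that you explicitly verify the complementation identity making $A^c$ negatively invariant, which the paper asserts without proof.
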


\begin{proof}
If $A$ is not co-null, then $A^c$ is negatively invariant of positive measure. By the previous lemma, there is an invariant set $B \subseteq A^c$ such that $A^c \setminus B$ is null.  Then $B$ has positive measure.  By ergodicity, $B$ is co-null, hence $A^c$ is co-null, contradicting the assumption that $A$ has positive measure.
\end{proof}

\section{Dynamical consequences of $\mu$-compatibility}\label{Dynamical consequences of mu-compatibility}
In this section we prove some lemmas concering semigroup actions on a metric space.  These results will be used in the context of $\mu$-compatible metrics and are important in our proof of the main classification result for W-measurably sensitive semigroup actions in section \ref{The Classification Theorem}. 

We shall call a metric \textit{1-Lipschitz} (with respect to an action $G$) if $d(T_g x,T_g y)\le d(x,y)$ for every $x,y\in X$ and every $g\in G$. For example, $d_G$ is always a 1-Lipschitz metric. Recall that an action on a metric space is called \textit{minimal} if the orbit of every point is dense.

  In the case we have a 1-Lipschitz metric and transitivity, then we have refined structure of the action.

\begin{Lemma} \label{lem_isometric}

Let $(X,d)$ be a metric space.  If the action of an abelian semigroup $G$ on $X$ is 1-Lipschitz and $G$-transitive, then it is $G$-uniformly rigid, minimal, and isometric.

\end{Lemma}

\begin{proof}

Let $x$ be a $G$-transitive point, which will be fixed throughout the proof.   We first prove $G$-uniform rigidity.  Let $\eps > 0$.  We must show that for each $g \in G$, there is $g' \ge g$ such that for all $y \in X$, $d(T_{g'}y,y) < \eps$.  Let $g \in G$.  Since $x$ is $G$-transitive, there is $g' \in G$ such that $g' \ge g$ and $d(T_{g'} x,x) < \eps/3$.  Let $y \in X$.  Since $x$ is a transitive point, there is $h \in G$ such that $d(y,T_h x) < \eps/3$.  Then since the action is 1-Lipschitz, we have that $d(T_h x, T_h T_{g'} x) \le d(x, T_{g'} x) < \eps/3$ and $d(T_h T_{g'} x, T_{g'} y) = d(T_{g'} T_h x, T_{g'} y) \le d(T_h x, y) < \eps/3$.  Therefore
\[
d(y, T_{g'} y) \le d(y,T_h x) + d(T_h x, T_h T_{g'} x) + d(T_h T_{g'} x, T_{g'} y) < \eps.
\]
Now fix a sequence $(h_i)$ in $G$ such that $h_i \to \infty$, and for all $y \in X$, $\lim_i d(T_{h_i}y, y) = 0$.  Next, we prove that the action is minimal.  Let $y$, $z \in X$, let $\eps > 0$, and let $g \in G$.  Since $x$ is $G$-transitive, there is $g' \ge g$ such that $d(T_{g'}x, y) < \eps/2$.  Similarly, there is $g'' \ge g'$ such that $d(T_{g''} x, z) < \eps/2$.  Let $g'' = g' + h$, where $h \in G$.  Since the action is 1-Lipschitz, we have
\begin{align*}
d(T_h y,z) &\le d(T_h y, T_{g''} x) + d(T_{g''} x, z) \\
&= d(T_h y, T_h T_{g'} x) + d(T_{g''} x,z)
\le d(y, T_{g'} x) + d(T_{g''} x, z)
< \eps.
\end{align*}

Finally, we prove that the action is by isometries.  Since the action is 1-Lipschitz, we know that for all $y$, $z \in X$, and all $g \in G$, we have $d(T_g y, T_g z) \le d(y,z)$.  Suppose that for some $y$, $z \in X$, and some $g \in G$, we have that $d(T_g y, T_g y) < d(y,z)$.  Since $h_i \to \infty$, there is $i_0$ such that $h_i \ge g$ for all $i \ge i_0$.  For such $i$, let $h_i = g + k_i$, where $k_i \in G$.  Then
\[
d(T_{h_i} y, T_{h_i} z) = d(T_{k_i} T_g y, T_{k_i} T_g z) \le d(T_g y, T_g z) < d(y,z).
\]
 Hence, letting $i \to \infty$, we obtain $d(y,z) < d(y,z)$, a contradiction.
\end{proof}

Now we generalize Theorem 4.3 of \cite{MR1849204} (see also Proposition 5.6 of \cite{MR2901201}).  We let $\CC_d(X)$ be the space of continuous maps from $X$ to $X$ with the metric $d(S_1,S_2) = \sup_{x \in X} d(S_1 x, S_2 x)$.  Let $\Lambda = \{ S \in \CC_d(X) : S T_g = T_g S \text{ for all } g \in G \}$.  For $x \in X$ we define the \textit{evaluation map at $x$}, ev$_x : \Lambda \to X$, by ev$_x(S) = Sx$. This proposition will be one of the main results of the classification  theorem as it will allow us to relate the original $G$-action on $X$ to the induced action of $G$ on the space $\Lambda$. Furthermore, it will give some structure results for this new action. 

\begin{Proposition} \label{prop_akinglasner} 
Let $(X,d)$ be a metric space and suppose there is an action of an abelian semigroup $G$ on $X$ that is 1-Lipschitz and $G$-transitive.

\begin{enumerate}

\item   If $x \in X$ is a $G$-transitive point, then ev$_x$ is an isometry.  Furthermore, we have $\Lambda = \overline{ \{T_g : g \in G \} }$.

\item Suppose that $d$ is a complete metric on $X$.  Then for every $G$-transitive point $x \in X$, ev$_x$ is invertible.

\item The set $\Lambda$ is an abelian group. In particular, $T_g$ is invertible for each $g \in G$.

\end{enumerate}

\end{Proposition}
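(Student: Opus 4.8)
The plan is to treat the three parts in order, bootstrapping each from the previous.

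For (1), I would first check that $\mathrm{ev}_x$ is distance preserving. The inequality $d(S_1x,S_2x)\le d(S_1,S_2)$ is immediate from the definition of the sup-metric on $\CC_d(X)$. For the reverse inequality, fix $y\in X$ and $\eps>0$; since $x$ is transitive I can choose $g\in G$ with $T_gx$ so close to $y$ that continuity of $S_1,S_2$ forces $d(S_iy,S_iT_gx)<\eps$ for $i=1,2$. Using $S_i\in\Lambda$ to rewrite $S_iT_gx=T_gS_ix$ and then the 1-Lipschitz hypothesis, $d(S_1T_gx,S_2T_gx)=d(T_gS_1x,T_gS_2x)\le d(S_1x,S_2x)$, and the triangle inequality gives $d(S_1y,S_2y)\le d(S_1x,S_2x)+2\eps$. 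Letting $\eps\to0$ and taking the supremum over $y$ yields $d(S_1,S_2)\le d(S_1x,S_2x)$, so $\mathrm{ev}_x$ is an isometry. To identify $\Lambda$, I note that $\Lambda$ is closed in $\CC_d(X)$ (a uniform limit of continuous maps each commuting with every $T_g$ again commutes with every $T_g$, using continuity of the $T_g$), and that each $T_g$ lies in $\Lambda$ since $G$ is abelian; hence $\overline{\{T_g\}}\subseteq\Lambda$. Conversely, given $S\in\Lambda$, transitivity supplies $g_n$ with $T_{g_n}x\to Sx$, and the isometry property gives $d(T_{g_n},S)=d(T_{g_n}x,Sx)\to0$, so $S\in\overline{\{T_g\}}$.

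For (2), the point is completeness. When $d$ is complete, $\CC_d(X)$ is complete (a uniformly Cauchy sequence of continuous maps into the complete space $X$ converges to a continuous map), and $\Lambda$, being closed, is complete as well. Given $y\in X$, transitivity yields $g_n$ with $T_{g_n}x\to y$; by the isometry of (1) the sequence $(T_{g_n})$ is Cauchy in $\Lambda$, hence converges to some $S\in\Lambda$ with $\mathrm{ev}_x(S)=Sx=\lim T_{g_n}x=y$. Thus $\mathrm{ev}_x$ is onto, and being an isometry it is invertible.

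For (3), I would first record two structural facts. Since $T$ is now $G$-transitive and 1-Lipschitz, Lemma~\ref{lem_isometric} shows each $T_g$ is an isometry of $X$ and the action is $G$-uniformly rigid; consequently every $S\in\Lambda=\overline{\{T_g\}}$ is itself an isometry, being a uniform limit of isometries. This makes composition jointly continuous on $\Lambda$ for the sup-metric, and $\Lambda$ is a monoid under composition containing $\mathrm{id}$. Commutativity follows because each $S_2\in\Lambda$ commutes with every $T_g$ by definition; writing $S_1=\lim T_{g_i}$ and using continuity of composition on both sides (the right side using that $S_2$ is an isometry), $S_1S_2=\lim T_{g_i}S_2=\lim S_2T_{g_i}=S_2S_1$. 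The remaining task is the existence of inverses, and this is where $G$-uniform rigidity is essential. Fix a rigidity sequence $g_i\to\infty$ with $T_{g_i}\to\mathrm{id}$. For a single $T_g$, I would take $i$ large enough that $g_i\ge g$, write $g_i=g+k_i$, and use that $T_g$ is an isometry to show $(T_{k_i})$ is Cauchy; its limit $S\in\Lambda$ satisfies $T_gS=ST_g=\mathrm{id}$ by continuity of composition and commutativity, so $T_g$ is invertible with $T_g^{-1}\in\Lambda$. For a general $S=\lim T_{g_i}\in\Lambda$, I would apply the isometry $T_{g_i}T_{g_j}$ to the pair $T_{g_i}^{-1}x,\,T_{g_j}^{-1}x$ and use commutativity to obtain $d(T_{g_i}^{-1}x,T_{g_j}^{-1}x)=d(T_{g_i}x,T_{g_j}x)$, so $(T_{g_i}^{-1})$ is Cauchy and converges to some $R\in\Lambda$; joint continuity of composition then gives $SR=\lim T_{g_i}T_{g_i}^{-1}=\mathrm{id}$, exhibiting $R=S^{-1}$. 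Hence $\Lambda$ is an abelian group, and in particular every $T_g$ is invertible.

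I expect the inverse construction in (3) to be the main obstacle: one must leverage $G$-uniform rigidity to manufacture approximate inverses $T_{k_i}$ (respectively $T_{g_i}^{-1}$) and then control them using the isometry of each $T_g$ together with completeness to extract genuine inverses lying in $\Lambda$. The isometry of $\mathrm{ev}_x$ in (1)–(2) and the commutativity of $\Lambda$, by contrast, are direct consequences of the 1-Lipschitz property and transitivity.
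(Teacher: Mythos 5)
Your parts (1) and (2) follow essentially the same route as the paper: density of the orbit $\{T_gx\}$ plus the 1-Lipschitz property gives the isometry of $\mathrm{ev}_x$, closedness of $\Lambda$ in $\CC_d(X)$ gives $\Lambda=\overline{\{T_g : g\in G\}}$, and completeness of $\CC_d(X)$ turns the Cauchy sequence $(T_{g_i})$ into a preimage, so $\mathrm{ev}_x$ is onto. Part (3) is where you genuinely diverge. The paper never touches uniform rigidity: it shows that for any $S\in\Lambda$ the point $Sx$ is again a transitive point (pick $T_gx$ near $Sx$, then $h=g+k\ge g$ with $T_hx$ near the target, and use the 1-Lipschitz property to transfer the estimate to $T_kSx$), applies part (2) at the transitive point $Sx$ to produce $S'\in\Lambda$ with $S'Sx=x$, and concludes $S'S=I=SS'$ from injectivity of the evaluation maps; commutativity is left to the reader. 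Your construction instead manufactures inverses as explicit limits: $T_g^{-1}=\lim T_{k_i}$ where $g_i=g+k_i$ runs along a rigidity sequence, and $S^{-1}=\lim T_{g_i}^{-1}$, with Cauchyness controlled by the isometry property and completeness. This is correct --- the Cauchy estimate you state at the single point $x$ upgrades to the sup-metric either by running the same computation at every $y$ or by invoking the isometry of $\mathrm{ev}_x$ from part (1) --- and it buys something the paper's argument does not make visible, namely that $T_g^{-1}$ is itself a limit of forward iterates of the action. The paper's route is shorter and needs only the 1-Lipschitz property, $G$-transitivity, and part (2).

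One hygiene point: you cite Lemma \ref{lem_isometric} to get the isometric structure and $G$-uniform rigidity, but that lemma is stated with an ergodicity hypothesis, while Proposition \ref{prop_akinglasner} assumes no ergodicity (indeed no measure-theoretic hypothesis beyond $\mu$-compatibility of $d$). This is harmless in substance, since the proof of Lemma \ref{lem_isometric} is purely metric and never uses ergodicity, only the 1-Lipschitz property and a $G$-transitive point; but as written your proof formally imports a hypothesis the proposition does not grant, so you should either remark that the lemma's proof is measure-free or rederive uniform rigidity and the isometric property directly under the proposition's hypotheses.
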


\begin{proof}

Let $x \in X$ be a transitive point.  If $S$, $S' \in \Lambda$, then for all $g \in G$ we have
\[
d(S T_g x, S' T_g x) = d(T_g S x, T_g S' x) \le d(Sx, S'x),
\]
since $T$ is 1-Lipschitz.  Since $x$ is transitive, we have
\[
d(S,S') = \sup_{x \in X} d(S T_g x, S' T_g x) \le d(Sx,S'x) \le d(S,S').
\]
Therefore ev$_x$ is isometric.  Now choose a sequence $(g_i)$ in $G$ such that $T_{g_i} x \to Sx$.  Then $d(T_{g_i},S) = d(T_{g_i}x,Sx) \to 0$, so that $\Lambda \subseteq \overline{ \{T_g : g \in G \} }$.  Since $\Lambda$ is closed in $\CC_d(X)$, the first part of the proposition is proved.

Now suppose that $d$ is complete.  Then $\CC_d(X)$ is also complete.  Let $y \in X$, and choose $(g_i)$ in $G$ so that $T_{g_i} x \to y$.  Since $(T_{g_i}x)$ is Cauchy, and ev$_x$ is isometric, we know that $(T_{g_i})$ is Cauchy.  Therefore there is $S \in \CC_d(X)$ such that $T_{g_i} \to S$.  Then $y = \lim_i T_{g_i} x = Sx = \text{ev}_x(S)$.  Thus ev$_x$ is onto.

Finally, we assume that $x$ is a $G$-transitive point of $X$, and we show that $\Lambda$ is an abelian group. We will prove this by showing  that all elements have an inverse. Let $S \in \Lambda$. We claim that $Sx$ is a transitive point.  For this, let $y \in X$ and $\eps > 0$.  Choose $g \in G$ such that $d(T_gx, Sx) < \eps/2$.  Since $x$ is $G$-transitive, there is $h \in G$ with $h \ge g$ such that $d(T_h x, y) < \eps/2$.  Write $h = g + k$.  Then since $T$ is 1-Lipschitz, we have
\[
d(T_k Sx,y) \le d(T_k Sx, T_h x) + d(T_h x, y) \le d(Sx,T_gx) + d(T_h x,y) < \eps.
\]
Therefore $Sx$ is a transitive point.  It follows from the second part of the proposition that ev$_{Sx}$ is onto, so there is $S' \in \Lambda$ such that $S' Sx = x$.  Since ev$_x$ is bijective, we have $S' S = I = S S'$. Lastly, it is clear that $\Lambda$ is abelian as all of its elements are formed as limits of commuting maps. We leave the details to the reader.

\end{proof}

We will now show that when we are working with a $\mu$-compatible metric, we can deduce $G$-transitivity. Consequently, for semigroup actions on metric spaces with a $\mu$-compatible metric, we have refined structure of the action and structural information on the group of commuting maps $\Lambda$.

\begin{Lemma} \label{lem_conull}

Let $(X,\mu,G,T)$ be $G$-conservative and ergodic, and let $d$ be a $\mu$-compatible metric on $X$.  Then there is a co-null invariant set $B \subseteq X$ such that every point of $B$ is $G$-transitive.

\end{Lemma}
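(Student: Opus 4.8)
The plan is to build $B$ as a countable intersection of co-null invariant sets, one for each member of a countable base for the topology. Since $d$ is $\mu$-compatible and the topology it generates is separable, I would fix a countable base $\{V_n\}_{n\in\IN}$ of non-empty open sets (for instance, balls of rational radius centered at points of a countable dense set). A point $x$ is $G$-transitive precisely when, for every $g\in G$, the tail $\{T_h x : h\ge g\}$ meets every $V_n$; so it suffices to produce, for each fixed $n$, a co-null invariant set of points whose tails all meet $V_n$, and then intersect over $n$.

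For each $n$ set $E_n = \bigcup_{h\in G} T_h^{-1}(V_n)$, the set of points whose orbit visits $V_n$ at some time. First I would check that $E_n$ is negatively invariant: using that $G$ is abelian, $T_g^{-1}(E_n) = \bigcup_{h\in G} T_{g+h}^{-1}(V_n) \subseteq \bigcup_{k\in G} T_k^{-1}(V_n) = E_n$. Since $V_n$ is non-null (non-empty open balls have positive measure under $\mu$-compatibility) and each $T_h$ is nonsingular, each $T_h^{-1}(V_n)$, and hence $E_n$, has positive measure. Lemma \ref{lem_invariantset} then produces an invariant set $B_n\subseteq E_n$ with $E_n\setminus B_n$ null, so $\mu(B_n)>0$; by ergodicity (Corollary \ref{lem_posinv}) $B_n$ is co-null.

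The crux is to upgrade membership in $E_n$ (the orbit visits $V_n$ at \emph{some} time) to the cofinal statement required for $G$-transitivity (the orbit visits $V_n$ at times $h\ge g$ for \emph{every} $g$). This is exactly where the full invariance of $B_n$ enters, rather than merely the negative invariance of $E_n$: if $x\in B_n$, then by positive invariance $T_g x\in B_n\subseteq E_n$ for every $g\in G$, so there is $h'\in G$ with $T_{h'}(T_g x)=T_{h'+g}x\in V_n$. Writing $h=h'+g$ we have $g\le h$ in the partial order (take $x=h'$ in $h=g+x$), and $T_h x\in V_n$; thus the tail $\{T_h x:h\ge g\}$ meets $V_n$ for every $g$.

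Finally I would set $B=\bigcap_n B_n$, a countable intersection of co-null invariant sets, hence itself co-null and invariant. For every $x\in B$ and every $g\in G$, the tail $\{T_h x:h\ge g\}$ meets every basic open set $V_n$, and since $\{V_n\}$ is a base this tail is dense; that is, every point of $B$ is $G$-transitive. The only genuinely delicate point is the cofinality upgrade described above, which is what forces the use of an \emph{invariant} witness $B_n$ obtained from Lemma \ref{lem_invariantset}, not just the negatively invariant set $E_n$.
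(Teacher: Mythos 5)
Your proof is correct, and it takes a genuinely different route to the one point where the two arguments could diverge: how to get the \emph{cofinal} (tail) visiting property rather than mere orbit density. The paper builds the tail condition into the sets themselves: for each point $z$ of a countable dense set, each $g \in G$, and each rational $r > 0$, it forms $A_{z,g,r} = \bigcup_{h \ge g} T_h^{-1}(B^d(z,r))$, checks these are negatively invariant (translating by $k$ shifts the union from $h \ge g$ to $h \ge g+k$), extracts co-null invariant subsets via Lemma \ref{lem_invariantset} and ergodicity, and intersects over the countable triple index $(z,g,r)$; a point of the intersection is then $G$-transitive essentially by the definition of $A_{z,g,r}$, with no further argument needed. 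You instead index only by a countable base $\{V_n\}$, take the full-orbit hitting sets $E_n = \bigcup_{h \in G} T_h^{-1}(V_n)$, and recover cofinality afterwards from invariance: if $x \in B_n$ then $T_g x \in B_n \subseteq E_n$, so $T_{h'}(T_g x) = T_{g+h'}x \in V_n$ for some $h'$, and $g + h' \ge g$. That upgrade step is valid (it uses commutativity and the positive-invariance half of invariance, both available), and it is precisely the point where you need the fully invariant set $B_n$ produced by Lemma \ref{lem_invariantset} rather than the merely negatively invariant $E_n$ — a subtlety the paper's route never confronts because its sets encode the tails from the start. What your version buys is a leaner index set (no $g$ coordinate) and a transparent explanation of why invariance, not just negative invariance, is essential; what the paper's version buys is that the final verification of $G$-transitivity is immediate. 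Both arguments rest on the same two pillars: Lemma \ref{lem_invariantset} applied to a negatively invariant set of positive measure, and ergodicity to promote the resulting invariant subset to a co-null set.
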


\begin{proof}

Since $\mu$ is nonatomic, $d$ has no isolated points.  Since $d$ is $\mu$-compatible, $d$ is a separable metric (Lemma 1.1 of \cite{MR2415039}).  Let $S$ be a countable dense subset of $X$.  For $z \in S$, $g \in G$, and $r \in \IQ_+$, let
\[
A_{z,g,r} = \bigcup_{h \ge g} T_h^{-1}(B^d(z,r)),
\]
where $B^d(z,r)$ is the ball of radius $r$ around the point $z$ with the metric $d$. We show that $A_{z,g,r}$ is negatively invariant for $T$.  Let $k \in G$.  Then
\[
T_k^{-1}(A_{z,g,r}) = \bigcup_{h \ge g} T_k^{-1} ( T_h^{-1}(B^d(z,r)) )
= \bigcup_{h \ge g} T_{h + k}^{-1}(B^d(z,r))
= \bigcup_{h \ge g + k} T_h^{-1}(B^d(z,r))
\subseteq A_{z,g,r}.
\]
By Lemma \ref{lem_invariantset}, there is a measurable invariant set $B_{z,g,r} \subseteq A_{z,g,r}$ with $A_{z,g,r} \setminus B_{z,g,r}$ co-null.  Since $(X,\mu,G,T)$ is ergodic, $B_{z,g,r}$ is either a null set or a co-null set, and hence $A_{z,g,r}$ is either null or co-null.  Since $A_{z,g,r}$ contains the set $T_g^{-1}(B^d(z,r))$ of positive measure, $A_{z,g,r}$, and hence $B_{z,g,r}$, is co-null.  Let $B = \bigcap_{z,g,r} B_{z,g,r}$.  Then $B$ is co-null and invariant.  We claim that every point of $B$ is $G$-transitive.  To see this, let $y \in B$, $x \in X$, $g \in G$, and $\eps > 0$.  Choose $z \in S$ with $d(z,x) < \eps/2$.  Pick $r \in \IQ_+$ with $r < \eps/2$.    Since $y \in A_{z,g,r}$, there is $h \in G$ such that $h \ge g$ and $y \in T_h^{-1}(B^d(z,r))$.  Then $T_h y \in B^d(z,r)$.  Then $d(T_h y,x) \le d(T_h y, z) + d(z,x) < r + \eps/2 < \eps$.
\end{proof}

\section{W-measurable sensitivity}\label{W-measurable sensitivity}

In this section, we will introduce the generalization of W-measurable sensitivity to the semigroup setting and prove a technical lemma that will aid us in generalizing the classification theorem.
\begin{Definition}

Let $(X,\mu,G,T)$ be a nonsingular dynamical system. If $d$ is a $\mu$-compatible metric, $T$ is called \textit{W-measurably sensitive with respect to $d$} if there is a constant $\delta > 0$ such that for all $x \in X$,
\[
\limsup_{g \to \infty} d(T_g x,T_g y) > \delta
\]
for almost all $y \in X$, where the limsup is taken as $g \to \infty$ with respect to the partial order defined before Remark \ref{rem_motivation}.  $T$ is called \textit{W-measurably sensitive} if it is W-measurably sensitive with respect to every $\mu$-compatible metric.

\end{Definition}

It will follow from the theorems below that such actions exists. In fact, they are very common.

\begin{Remark}
(1) We obtain an equivalent characterization of W-measurably sensitivity  if we replace ``for all $x\in X$" with ``for almost every $x\in X$". Furthermore, we can replace ``$\limsup_{g\to\infty}d(T_g x, T_g y)>\delta$" with ``there exists $g\in G/\{0\}$ such that $d(T_g x, T_g y)>\delta$". These claims follow as in \cite{MR2901201}, Proposition 4.2.

(2) Recalling that any abelian cancelative semigroup is embeddable in a group, we see that we can define W-measurable sensitivity for any group that arises from such an embedding by asking that the semigroup it arises from is W-measurably sensitive.
\end{Remark}

We prove one more technical lemma that allows us to conclude when $d_G$ is $\mu$-compatible before generalizing the dichotomy from \cite{MR2901201}. Recall that, 
$$d_G(x,y):=\sup_{g\in G}d(T_gx,T_gy).$$

\begin{Lemma}\label{lem_technical}
Let $(X,\mu,G,T)$ be a nonsingular ergodic G-conservative action and $d$ a $\mu$-compatible metric on $X$. If the system is not W-measurably sensitive with respect to $d$, then there exists a positively invariant measurable set $X_1$ of full measure such that $d_G$ is a $\mu$-compatible metric for $(X_1,\mu, G, T)$, where $\mu$ and $T$ are the restrictions to $X_1$.
\end{Lemma}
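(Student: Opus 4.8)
The plan is to analyze what the failure of W-measurable sensitivity gives us and convert it into a statement about $d_G$ having positive measure on balls. Recall that $d_G(x,y) = \sup_{g \in G} d(T_g x, T_g y) \geq d(x,y)$, so $d_G$ is automatically a metric that dominates $d$, and $d_G$-balls are contained in $d$-balls. The issue for $\mu$-compatibility is the reverse: I need to show that $d_G$-balls have positive measure, which requires that points cannot be separated too much under the action from a positive-measure set of partners. This is precisely where failure of sensitivity should help. Using the equivalent formulation in Remark \ref{expansion}, failure of W-measurable sensitivity with respect to $d$ means that for every $\delta > 0$, there is a point $x$ (in fact, by a Fubini/measure argument one expects a positive-measure set of such $x$) for which the set of $y$ with $\sup_{g} d(T_g x, T_g y) > \delta$ — equivalently $d_G(x,y) > \delta$ — is \emph{not} conull, i.e. has positive-measure complement.

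First I would negate the sensitivity condition carefully. Sensitivity fails means: for every $\delta > 0$, it is not the case that for all $x$ the set $\{y : \limsup_{g\to\infty} d(T_g x, T_g y) > \delta\}$ is conull. Shrinking $\delta$ along a sequence $\delta_n \to 0$ and combining this with the characterization in Remark \ref{expansion} that replaces $\limsup_{g\to\infty}$ with ``there exists $g$'', I expect to extract, for each $n$, a point $x_n$ such that $\{y : d_G(x_n, y) \leq \delta_n\}$ has positive measure — that is, a closed $d_G$-ball of small radius around some center has positive measure. The natural next step is to upgrade this from ``some ball has positive measure'' to ``a conull set of balls have positive measure,'' which is what $\mu$-compatibility demands.

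The bridge from a single good ball to a conull positively invariant set is where conservativity and ergodicity enter, and I expect this to be the main obstacle. The idea is that the set $\{x : \mu(B^{d_G}(x,r)) > 0 \text{ for all } r > 0\}$, or an appropriate variant, should be shown to be positively invariant: if $d_G(x,y)$ is small, then applying $T_g$ can only decrease (or not increase) the $d_G$-distance in a controlled way, because $d_G$ is itself $1$-Lipschitz with respect to the action (as noted for $d_G$ in the text just before Lemma \ref{lem_isometric}). Concretely, $d_G(T_g x, T_g y) = \sup_h d(T_{h} T_g x, T_h T_g y) = \sup_h d(T_{h+g}x, T_{h+g}y) \leq d_G(x,y)$, so $T_g$ maps $d_G$-balls into $d_G$-balls of the same radius, preserving the property of having positive measure in the forward direction. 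This $1$-Lipschitz property of $d_G$ should let me show that the set $X_1$ on which $d_G$-balls have positive measure is positively invariant, and then Corollary \ref{lem_posinv} forces a positively invariant set of positive measure to be conull.

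The remaining step is to confirm that $X_1$ indeed has positive measure and that on $X_1$ every nonempty $d_G$-open ball has positive measure. For the former I would use the good ball produced from the failure of sensitivity together with conservativity to populate $X_1$; for the latter, $\mu$-compatibility of the restricted metric means balls centered at points of $X_1$ meet $X_1$ in positive measure. I would take $X_1 = \bigcap_n \{x : \mu(B^{d_G}(x, \delta_n)) > 0\}$ or the conull positively invariant set supplied by Corollary \ref{lem_posinv}, then verify directly that for any $x \in X_1$ and any $r > 0$, choosing $\delta_n < r$ gives $B^{d_G}(x,r) \supseteq B^{d_G}(x,\delta_n)$ of positive measure. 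Restricting $\mu$ and $T$ to $X_1$ (legitimate since $X_1$ is positively invariant and conull) then yields a system for which $d_G$ is $\mu$-compatible, as claimed. The delicate points to watch are the measurability of $d_G$ and of the map $x \mapsto \mu(B^{d_G}(x,r))$, which I would handle using separability of $d$ (hence of $d_G$, after checking it) and a countable generating family of balls.
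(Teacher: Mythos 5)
Your proposal is correct and follows essentially the same route as the paper: both hinge on the set of points all of whose $d_G$-balls have positive measure, its invariance via the 1-Lipschitz property of $d_G$, and the conservative--ergodic machinery (Lemma \ref{lem_invariantset}/Corollary \ref{lem_posinv}) to upgrade positive measure to full measure. The only difference is organizational: the paper argues by contradiction, defining $D(x)=\max\{\eps\ge 0:\mu(B^{d_G}(x,\eps))=0\}$ and showing that if $\{D>r\}$ had positive measure it would be conull and hence force W-measurable sensitivity, whereas you argue directly from the failure of sensitivity, which costs you only the small extra triangle-inequality (doubling) step needed to see that each set $\{x:\mu(B^{d_G}(x,r))>0\}$ has positive measure before invoking Corollary \ref{lem_posinv}.
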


\begin{proof}
Let $D(x) = \max \{ \eps \ge 0 : \mu(B^{d_G}(x,\eps)) = 0 \}$.  We aim to show that this is the zero function a.e., because then we will have that $d_G$ is $\mu$-compatible on a subset of full measure. Let $B$ denote the set of points where $D$ is non-zero, that is, $B = \{ x \in X : D(x) > 0 \}$. We need to show that $B$ is a null set. To do this we will need to use that $D\ge D\circ T_h$ for any $h\in G$ which will follow since $d_G$ is 1-Lipschitz. Using this inequality, we then show that $B$ is negatively invariant. Finally, we will argue by contradiction that $B$ is a null set. 

We show that $D\ge D\circ T_h$. Indeed, for $\eps > 0$, $h \in G$, and any $x \in X$, we have
\[
D \circ T_h (x) \ge \eps
\iff \mu(B^{d_G}(T_h x,\eps)) = 0
\iff \mu(T_h^{-1}(B^{d_G}(T_h x,\eps))) = 0.
\]
Since $d_G$ is 1-Lipschitz,
\[
y \in B^{d_G}(x,\eps)
\iff d_G(y,x) < \eps
\implies d_G(T_h y, T_h x) < \eps
\iff y \in T_h^{-1}(B^{d_G}(T_h x,\eps)).
\]
Thus $B^{d_G}(x,\eps) \subseteq T_h^{-1}(B^{d_G}(T_hx,\eps))$, and hence $\mu(B^{d_G}(x,\eps)) \le \mu(T_h^{-1}(B^{d_G}(T_h x,\eps)))$.  Therefore, $D \circ T_h (x) \ge \eps $ implies $ D(x) \ge \eps$.   This is true for all $\eps > 0$, and hence $D(x) \ge D \circ T_h (x)$.  Thus for all $h \in G$ we have $D \ge D \circ T_h$.

We use this to show that $B = \{ x \in X : D(x) > 0 \}$ is negatively invariant. Indeed,
\[
x \in T_h^{-1} B
\iff T_h(x) \in B
\iff D(T_h(x)) > 0
\implies D(x) > 0
\iff x \in B,
\]
i.e. $T_h^{-1}B \subseteq B$. Since $h \in G$ was arbitrary we have that $B$ is negatively invariant.

We now show that $\mu(B) = 0$. Suppose to the contrary that $B$ has positive measure. Then there is $r > 0$ such that $\mu(\{ D > r \} ) > 0$.  We claim that $\{ D > r \}$ is negatively invariant.  To see this, let $h \in G$ and $x \in T_h^{-1} \{ D > r \}$. Then $D(T_h x) > r$.  Since $D > D \circ T_h$ we have $D(x) > r$, hence $x \in \{ D > r \}$.  This verifies the claim.  Now by Lemma 2.3 there is an invariant set $A \subseteq \{ D > r \}$ with $\mu(A) = \mu(\{ D > r \})$, hence $\mu(A) > 0$.  By Corollary 2.4 we have that $A$ is conull, and hence $\{ D > r \}$ is conull.   Thus for almost every $x$, $D(x) > r$.  Thus for almost every $x$ we have that $B^{d_G}(x,r)$ is null.  For $y \in X$, $y \in B^{d_G}(x,r)$ if and only if for each $g \in G$ we have $d(T_g y, T_g x) < r$.  Thus $\{ y \in X : (\forall g \in G) \, ( d(T_g y, T_g x) < r) \}$ is null.  Then $\{ y \in X : (\exists g \in G) \, (d(T_g y, T_g x) \ge r \}$ has full measure.  Therefore for almost all $x$ and $y$ there is $g \in G$ such that $d(T_g x, T_g y) \ge r$.  By Remark 4.2 this implies W-measurable sensitivity, a contradiction.  Therefore $B$ is null, and hence $D = 0$ almost everywhere.  This finishes the proof.
\end{proof}

\section{The Classification Theorem}\label{The Classification Theorem}

In this section we generalize the dichotomy given for $\IN$-actions, first formalized and proved in  \cite{MR2901201}. Roughly, it states that a conservative ergodic classical dynamical system is W-measurably sensitive or acts by isometries and has refined structure. The aim of this section is to generalize this to the context of semigroup actions.
 
\begin{Theorem} \label{classif}
Let $(X,\mu,G,T)$ be a nonsingular ergodic G-conservative action.  If $T$ is not W-measurably sensitive, then $T$ is isomorphic mod measure 0 to a minimal $G$-uniformly rigid action by invertible isometries on an abelian Polish group.
\end{Theorem}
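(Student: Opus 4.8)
The plan is to combine the dichotomy-style reasoning developed in the previous sections with the structure theorem for $\Lambda$ from Proposition \ref{prop_akinglasner}. The starting observation is that, by hypothesis, $T$ is \emph{not} W-measurably sensitive, so there is some $\mu$-compatible metric $d$ with respect to which $T$ fails to be W-measurably sensitive. Applying Lemma \ref{lem_technical} to this $d$, I obtain a positively invariant measurable set $X_1$ of full measure on which the uniform metric $d_G$ is $\mu$-compatible. After discarding a null set and restricting to $X_1$, I may assume from the outset that $d_G$ itself is $\mu$-compatible on $X$, and the point of passing to $d_G$ is that $T$ is automatically $1$-Lipschitz with respect to $d_G$ (this is noted right after the definition of $d_G$). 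So the new system $(X,\mu,G,T)$ is $G$-conservative, ergodic, and $1$-Lipschitz with respect to the $\mu$-compatible metric $d_G$.

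Next I would invoke Lemma \ref{lem_conull} with the metric $d_G$: since the system is $G$-conservative and ergodic, there is a co-null invariant set $B$ every point of which is $G$-transitive. Restricting once more to $B$ (again full measure, so the restriction is isomorphic mod $0$ to the original), I may assume $T$ is $G$-transitive. Now Lemma \ref{lem_isometric} applies: because $T$ is $1$-Lipschitz and $G$-transitive with respect to $d_G$, the action is $G$-uniformly rigid, $G$-minimal, and isometric. This already delivers three of the asserted conclusions; what remains is to realize the system, up to isomorphism mod $0$, as an action by \emph{invertible} isometries on an \emph{abelian Polish group}.

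For that final realization I turn to Proposition \ref{prop_akinglasner}, which requires a \emph{complete} $\mu$-compatible $1$-Lipschitz metric under which $T$ is transitive; I would either argue that $d_G$ may be taken complete after passing to the completion of the space (the completion is still Polish and separable, and the action extends by uniform continuity since $T$ is $1$-Lipschitz), or remark that $X$ is a standard Lebesgue space so we may assume completeness at the outset. With completeness in hand, parts (2) and (3) of the proposition give that for a transitive point $x$ the evaluation map $\mathrm{ev}_x : \Lambda \to X$ is an isometric bijection, that $\Lambda = \overline{\{T_g : g \in G\}}$ is an abelian group under composition, and that each $T_g$ is invertible. The map $\mathrm{ev}_x$ then transports the dynamics: identifying $X$ with $\Lambda$ via $\mathrm{ev}_x$, the action of $g$ on $X$ corresponds to left multiplication by $T_g$ on the group $\Lambda$. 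Since $\Lambda$ is a closed subset of the complete separable space $\CC_{d_G}(X)$ and is a topological group under the uniform metric, it is an abelian Polish group, and the $T_g$ act on it by invertible isometries (left translations of a $1$-Lipschitz, in fact isometric, group). This gives the desired isomorphism mod $0$.

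The main obstacle I anticipate is the completeness issue feeding into Proposition \ref{prop_akinglasner}: Lemma \ref{lem_technical} only guarantees that $d_G$ is $\mu$-compatible, not complete, so some care is needed to pass to a complete metric (or to the metric completion of $X$) while preserving $\mu$-compatibility, the $1$-Lipschitz property, and the measure-isomorphism class of the system. A secondary point requiring attention is the careful bookkeeping of the successive full-measure restrictions (first to $X_1$, then to the $G$-transitive set $B$) so that the final object is genuinely isomorphic mod $0$ to the original system and so that the group structure on $\Lambda$ is correctly identified as the carrier of the realized action; verifying that $\mathrm{ev}_x$ intertwines the $G$-action on $X$ with translations on $\Lambda$, and that these translations are isometries of the Polish group $\Lambda$, is routine but should be stated explicitly.
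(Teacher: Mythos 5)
Your proposal is correct and follows essentially the same route as the paper's proof: Lemma \ref{lem_technical} to get $X_1$ with $d_G$ $\mu$-compatible, Lemma \ref{lem_conull} for $G$-transitivity, Lemma \ref{lem_isometric} for $G$-uniform rigidity, $G$-minimality and isometry, then completion of $(X_1,d_G)$ followed by Proposition \ref{prop_akinglasner} to transport the system onto the abelian Polish group $\Lambda$ via $\mathrm{ev}_x$. The completeness issue you flag is resolved exactly as in your first suggested option (pass to the metric completion, extend the measure by $\mu'(A):=\mu(A\cap X_1)$ and extend each $T_g$ by uniform continuity of isometries), so there is no gap.
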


\begin{proof} Suppose that $(X,\mu,G,T)$  is not W-measurably sensitive, then by Lemma \ref{lem_technical} there is a positively invariant full measure set $X_1$ for which $d_G$ is $\mu$-compatible. We can restrict $\mu$ and $T$ to $X_1$. 

Now we have a quintuple $(X_1, \mu , G, T , d_G)$ where $d_G$ is $\mu$-compatible. We can apply our lemmas from section 3 to obtain dynamical information. By Lemma \ref{lem_conull} we know this action is G-transitive. Since $d_G$ is 1-Lipschitz, we have the action is  $G$-uniformly rigid, minimal, and isometric. 

We can complete the metric space $(X_1, d_G)$ to obtain $(X', d')$. This will be a Polish space since $d_G$ is separable. We extend the measure $\mu$ to $\mu'$ by defining a subset $A$ of $X'$ to be measurable if $A\cap X_1$ is measurable and $\mu'(A):= \mu(A\cap X_1)$. Since the action is by isometries, then it is continuous and so we can uniquely extend $T_g$, for every $g\in G$, to $(X',d')$ such that it continues to be an isometry. 

Finally,  by Proposition \ref{prop_akinglasner}, for any transitive point $x\in X'$, we have  the evaluation map is an invertible isometry from $(\Lambda,d_\Lambda)\to (X',d')$ where $d_\Lambda(S_1,S_2)=\sup_{x\in X}d'(S_1x,S_2x)$ and that $T_g$ is invertible for every $g\in G$.  We pullback $\mu'$ to a measure $\eta$ on $\Lambda$ via an evaluation map. Let $\tau_g:\Lambda\to \Lambda$ denote group rotation on $\Lambda$ by $T _g$ for any $g\in G$ and $\tau:G\to \text{End}(\Lambda,\eta)$ be the map $g\mapsto \tau_g$. By construction, we see $(\Lambda, \eta, G, \tau)$ is then measurably isomorphic to  $(X,\mu,G,T)$.
\end{proof}

\begin{Remark}
(1) Notice that the measurable isomorphism produced above is to the group $\Lambda$ consisting of maps that commute with the action $G$. This group comes equipped naturally with the Haar measure. In general, the measure $\eta$ that we produce on $\Lambda$ need not be the Haar measure. This is because $\eta$ is (basically) the pullback measure of a nonsingular ergodic measure and hence, need not even be measure-preserving. However, we show in the next section that in the finite measure-preserving case, the measure $\eta$ is the Haar measure on $\Lambda$.

(2) W-measurable sensitivity of semigroups is invariant under measurable isomorphism. The proof  follows as in the case $G=\IN$ (see \cite{MR2901201}). A related question is whether W-measurable sensitivity is preserved under \emph{factors}. We say that $(Y, \nu, G, T^\prime)$ is a factor of $(X, \mu, G, T)$ if there is a measurable $\varphi : X \rightarrow Y$ with $\varphi_* \mu = \nu$ and $T^\prime_g( \varphi(x)) = \varphi(T_g(x))$ for almost every $x \in X$. We provide a counterexample showing that W-measurable sensitivity is not preserved under factors:

Let $\alpha\notin \IQ$ and consider the product map  $T(x,y)= (2x\mod1, y+\alpha \mod 1)$ and the product system $(S^1\times S^1, m_{S^1}\times m_{S^1}, \IN, T)$ where $m_{S^1}$ is Haar measure on the circle $S^1$. By our choice of $\alpha$ we have the product system is an ergodic finite measure-preserving system.

By basic entropy theory (see chapter four of \cite{MR648108}) we know that the product system has positive entropy since the entropy of a product system is the sum of the entropies of each part, and the times two map has positive entropy. On the other hand, Proposition 7.1 from \cite{MR3326024} states that any positive entropy ergodic finite measure-preserving system must be W-measurably sensitive. We conclude that $(S^1\times S^1, m_{S^1}\times m_{S^1}, \IN, T)$ is W-measurably sensitive. Since the circle rotation is a factor of it and the circle rotation is not W-measurably sensitive, W-measurable sensitivity is not invariant under factors.

\end{Remark}


\subsection{Compact $\mu$-Compatible metrics}

As in \cite{MR2901201}, we can prove a stronger classification theorem in the finite measure-preserving case. We use this stronger classification to show that in the definition of W-measurable sensitivity, we only need to consider $\mu$-compatible metrics that are compact (i.e. metrics $d$ for which the topology generated by $d$ is compact).
\begin{Theorem} \label{thm_finiteclassification}
Let $(X,\mu,G,T)$ be an ergodic finite measure-preserving system.  If $T$ is not W-measurably sensitive, then $T$ is isomorphic mod measure 0 to a minimal $G$-uniformly rigid action by invertible isometries on a compact abelian group equipped with the Haar measure.
\end{Theorem}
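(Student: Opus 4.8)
The plan is to run the argument behind Theorem \ref{classif} and then upgrade its conclusion from ``Polish'' to ``compact'' by exploiting finiteness together with measure-preservation. First I would check that an ergodic finite measure-preserving system is automatically $G$-conservative, so that Theorem \ref{classif} genuinely applies: given $A$ with $\mu(A)>0$ and $0\neq g\in G$, the sets $T_{ng}^{-1}(A)$, $n\ge 0$, all have measure $\mu(A)$, so in the finite space they cannot be pairwise disjoint; choosing $m<n$ with $\mu\bigl(T_{mg}^{-1}(A)\cap T_{ng}^{-1}(A)\bigr)>0$ and writing $T_{mg}^{-1}(A)\cap T_{ng}^{-1}(A)=T_{mg}^{-1}\bigl(A\cap T_{(n-m)g}^{-1}(A)\bigr)$, nonsingularity of $T_{mg}$ yields $\mu\bigl(A\cap T_h^{-1}(A)\bigr)>0$ with $h=(n-m)g\ge g$. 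Thus Theorem \ref{classif} gives a transitive point $x_0$, which I would take in $X_1$ using Lemma \ref{lem_conull}, the completed Polish space $(X',d')$, the finite measure $\mu'$ with $\mu'(X')=\mu(X)$, and the abelian Polish group $\Lambda=\overline{\{T_g:g\in G\}}$ acting by invertible isometries, with $\mathrm{ev}_{x_0}:\Lambda\to X'$ an invertible isometry by Proposition \ref{prop_akinglasner}. It then remains to prove that $X'$, equivalently $\Lambda$, is compact.

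The key point is that \emph{every} element of $\Lambda$ preserves $\mu'$. Each $T_g$ preserves $\mu'$ because $T$ is measure-preserving; and if $S_n\to S$ uniformly on $X'$ with each $S_n$ measure-preserving, then for every bounded continuous $f$ dominated convergence gives $\int f\circ S\,d\mu'=\lim_n\int f\circ S_n\,d\mu'=\int f\,d\mu'$, so $S$ preserves $\mu'$ as well. Since $\Lambda$ is the closure of $\{T_g\}$ in $\CC_{d'}(X')$, every $S\in\Lambda$ is a measure-preserving invertible isometry. This is the step I expect to require the most care, as it is the only place where passing to limits in $\CC_{d'}(X')$ must be reconciled with the measure; everything afterward is formal.

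With this in hand I would show that balls of a fixed radius all have the same positive measure. Fix $\eps>0$ and put $c(\eps)=\mu'\bigl(B^{d'}(x_0,\eps)\bigr)$, which is positive because $x_0\in X_1$ and $d_G$ is $\mu$-compatible. For arbitrary $y\in X'$, surjectivity of $\mathrm{ev}_{x_0}$ gives $S\in\Lambda$ with $Sx_0=y$; since $S$ is a bijective isometry, $S\bigl(B^{d'}(x_0,\eps)\bigr)=B^{d'}(y,\eps)$, and since $S$ preserves $\mu'$ we get $\mu'\bigl(B^{d'}(y,\eps)\bigr)=c(\eps)$. Finally, if $X'$ were not totally bounded there would be some $\eps>0$ and an infinite $\eps$-separated set $\{y_i\}$, whence the balls $B^{d'}(y_i,\eps/2)$ are pairwise disjoint, each of measure $c(\eps/2)>0$, forcing $\mu'(X')=\infty$ and contradicting $\mu'(X')=\mu(X)<\infty$. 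Hence $X'$ is totally bounded and complete, so compact, and $\Lambda\cong X'$ is a compact abelian group. The remaining structural conclusions, namely $G$-minimality, $G$-uniform rigidity, and the action by invertible isometries, are exactly those already produced by Theorem \ref{classif}, which completes the proof.
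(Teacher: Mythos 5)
Your proof is correct, and it takes a genuinely different route from the paper's in two places. First, you explicitly verify that an ergodic finite measure-preserving system is $G$-conservative (via the Poincar\'e-recurrence argument on the sets $T_{ng}^{-1}(A)$); the paper invokes Theorem \ref{classif} without comment, so you are filling a step it leaves implicit. Second, and more substantively, the two proofs establish total boundedness differently. The paper fixes $\varepsilon>0$, sets $f(x)=\mu(B(x,\varepsilon/2))$, shows $f\circ T_g=f$ using measure-preservation and invertibility of the isometries, and then asserts that $f$ is \emph{continuous}, so that $G$-transitivity forces $f$ to be constant. You instead upgrade measure-preservation from the maps $T_g$ to every $S\in\Lambda=\overline{\{T_g:g\in G\}}$ by a dominated-convergence argument (a uniform limit of measure-preserving maps preserves the finite Borel measure $\mu'$, since such a measure on a metric space is determined by integrals of bounded continuous functions), and then use surjectivity of $\mathrm{ev}_{x_0}$ to carry $B^{d'}(x_0,\varepsilon)$ onto an arbitrary ball $B^{d'}(y,\varepsilon)$ by a bijective measure-preserving isometry. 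Your version buys something real: in a general metric measure space the function $x\mapsto\mu(B(x,r))$ is only lower semicontinuous (continuity can fail when spheres carry positive measure), so the paper's appeal to continuity is a soft spot that your homogeneity argument bypasses entirely --- you never need any regularity of ball measures, only that elements of $\Lambda$ send balls to balls of equal measure. After that, the two proofs converge: all balls of a fixed radius have the same positive measure $c(\varepsilon)$, so finiteness of $\mu'$ forbids an infinite disjoint family of $\varepsilon/2$-balls (the paper phrases this as a maximal finite disjoint collection, you as an infinite separated set), giving total boundedness, hence compactness of the complete space $X'\cong\Lambda$, with the remaining structural conclusions inherited from Theorem \ref{classif} in both arguments.
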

 
\begin{proof}
If  the system is not W-measurably sensitive, then by Theorem \ref{classif}, it is isomorphic to a minimal $G$-uniformly rigid action by isometries on a Polish group which we denote by $(X,\mu,G,T)$. Since it is complete by construction it suffices to show that $(X,d)$ is totally bounded where $d$ is the $\mu$-compatible metric constructed in Theorem \ref{classif}. 

Let $\varepsilon>0$ and $f(x)=\mu(B(x,\varepsilon/2))$. This function is positive for any $x\in X$ by $\mu$-compatibility. We will show that it is constant. Indeed, since $\mu$ is measure-preserving and $d$ is an isometry, we have for each $x\in X$
$$f(T_gx)=\mu(B(T_gx,\varepsilon/2)) = \mu(T^{-1} _gB(T_gx,\varepsilon/2)) = f(x)$$
for any $g\in G.$ Since $f$ is continuous and our system is $G$-transitive, then $f$ is constant.

Then there is a largest finite collection $\{x_1,\ldots, x_n\}$ so that $B(x_i,\varepsilon/2)$ are mutually disjoint. Otherwise, we would contradict that the measure space is finite. Moreover, since this collection is the largest possible, then for every $x\in X$, we have some $i$ so that $d(x,x_i)<\varepsilon$. Hence, $X=\bigcup_{i=1} ^n B(x_i,\varepsilon) $ and so $(X,d)$ is totally bounded.

Given that the measurable isomorphism is to a compact metrizable group, then the measure $\eta$ produced in Theorem \ref{classif} must be the Haar measure. Indeed, the action we produce of $G$ on $\Lambda$ is a minimal rotation and such actions have the Haar measure as the only invariant measure (see Theorem 6.20 of \cite{MR648108}).
\end{proof}

\begin{Corollary}  \label{thm_finiteclassification} A finite measure-preserving ergodic system  $(X, \mu, G,T)$ is
W-measurably sensitive with respect to all $\mu$-compatible metrics if and only if
it is W-measurably sensitive with respect to all compact $ \mu$-compatible metrics.

\end{Corollary}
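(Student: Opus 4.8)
The forward implication is immediate: every compact $\mu$-compatible metric is in particular a $\mu$-compatible metric, so W-measurable sensitivity with respect to all $\mu$-compatible metrics trivially entails W-measurable sensitivity with respect to all compact ones. The content is the converse, which I would establish in contrapositive form: if $T$ fails to be W-measurably sensitive with respect to some $\mu$-compatible metric, then it already fails with respect to some \emph{compact} $\mu$-compatible metric. So suppose $T$ is not W-measurably sensitive.

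The finite measure-preserving classification theorem proved above (the strengthening of Theorem~\ref{classif}) then produces a mod-$0$ isomorphism $\phi$ from $(X,\mu,G,T)$ onto an action $(K,\nu,G,\tau)$ by invertible isometries on a compact abelian group $K$, equipped with a compact $\nu$-compatible metric $\rho$ for which each $\tau_g$ is a $\rho$-isometry. The next observation is that an isometric action can never be W-measurably sensitive with respect to its isometry metric. Indeed, since $\rho(\tau_g u,\tau_g v)=\rho(u,v)$ for all $g$, we have $\limsup_{g\to\infty}\rho(\tau_g u,\tau_g v)=\rho(u,v)$; for any candidate $\delta>0$ and any $u\in K$ the ball $B^{\rho}(u,\delta)$ has positive measure by $\nu$-compatibility, so the set of $v$ with $\rho(u,v)\le\delta$ is not null and the defining condition of W-measurable sensitivity fails. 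Hence $(K,\nu,G,\tau)$ is not W-measurably sensitive with respect to the compact metric $\rho$.

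It then remains to transport this conclusion back to $X$. I would pull $\rho$ back along $\phi$, setting $d(x,y):=\rho(\phi(x),\phi(y))$; with respect to $d$ the action is by isometries and $d$ is $\mu$-compatible (balls pull back to positive-measure sets since $\phi_*\mu=\nu$), so by the previous paragraph $T$ is not W-measurably sensitive with respect to $d$. This contradicts the hypothesis of the converse, and the proof is complete.

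The one genuinely technical point, which I expect to be the main obstacle, is making the pulled-back $d$ an \emph{honest compact} metric rather than merely a pseudometric. Because $\phi$ is defined only mod $0$, the image $\phi(X)$ is a full-measure but generally non-closed subset of $K$, so the topology of $d$ need not be compact as it stands. The fix is to upgrade $\phi$ to a genuine bijection of $X$ onto $K$: choose conull Borel sets $X_0\subseteq X$ and $K_0\subseteq K$ on which $\phi$ is a Borel isomorphism intertwining the actions and whose complements have equal cardinality, and extend $\phi$ by an arbitrary bijection of the two null complements. Then $(X,d)$ becomes isometric to the compact space $(K,\rho)$, hence compact, while $\phi$ has been altered only on a null set. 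Since W-measurable sensitivity depends only on the almost-everywhere behavior of the action (Remark~\ref{expansion}), this modification leaves the conclusion intact, so $d$ is the desired compact $\mu$-compatible metric witnessing the failure of W-measurable sensitivity. Everything outside this cardinality bookkeeping is a direct combination of the classification theorem with the elementary fact that isometries do not separate nearby points.
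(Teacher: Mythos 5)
Your proof is correct and follows essentially the same route as the paper: both apply the finite measure-preserving classification theorem and then use the observation that an isometric action cannot be W-measurably sensitive with respect to a $\mu$-compatible metric for which it acts by isometries, since balls have positive measure. The only difference is in the transfer step: the paper moves the sensitivity hypothesis forward to the compact-group system by invoking isomorphism-invariance (Remark \ref{factors}), whereas you pull the compact metric back to $X$ and explicitly repair the mod-$0$ isomorphism into a genuine bijection so that the pulled-back metric is honestly compact --- a subtlety that the paper's shorter argument leaves implicit.
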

\begin{proof} The forward direction is immediate.

To show the converse, suppose  $(X, \mu, G,T)$ is W-measurably sensitive with
respect to all $compact$ $\mu$-compatible metrics but there is some $\mu$-compatible
metric for which $T$ is not W-measurably sensitive. By Theorem \ref{thm_finiteclassification}, we
must then have that it is isomorphic to an isometry on a compact Abelian
group with respect to some metric that is $\mu$-compatible. 

By supposition, the isomorphic system is W-measurably sensitive with respect
to the compact metric and an isometry for this compact metric,
which is impossible.
\end{proof}

\section{Restriction of Semigroup actions and W-Measurable Sensitivity}\label{Restriction of Semigroup actions and W-Measurable Sensitivity}

The results in this section have to do with the relation between  W-measurable sensitivity of $(X,\mu,G,T)$ and W-measurable sensitivity of $(X,\mu, H,T)$  for sub-semigroups $H$ of $G$. We denote this restriction by $T|_H$. 

\subsection{Syndetic sub-semigroups}\label{Syndetic sub-semigroups}
The first result concerns what might be termed \textit{cofinite} sub-semigroups.  As a special case we will see that a power of a single transformation exhibiting W-measurable sensitivity also is W-measurably sensitive if it is still conservative and ergodic.

Denote for a semigroup element $f$ and subset $A$ of $G$, the backwards translation by $$f^{-1}A:=\{g\in G:f+g\in A\}.$$

\begin{Definition}

Let $G$ be an abelian semigroup.  A sub-semigroup $H$ of $G$ is called \textit{syndetic} if $G$ can be covered by finitely many backwards translations of $H$. That is, there is a finite set $F\subset G$ so that
$$\bigcup _{f\in F}f^{-1}H=G.$$ 
\end{Definition}

\begin{Remark} Recall that an abelian cancellative semigroup is embeddable in a group. Then syndeticity is equivalent to requiring that the corresponding group for $H$ is of finite index in the corresponding group for $G$.
\end{Remark}

\begin{Example}For $G=\IN$ the subset $H=k\IN$  is syndetic for any $k\in \IN$.
\end{Example}

\begin{Theorem} \label{syn}
Let $(X,\mu,G,T)$ be a nonsingular $G$-conservative action that is W-measurably sensitive. If $H$ is a sub-semigroup of $G$ which is syndetic in $G$ and $T|_H$ is $H$-conservative and ergodic, then $T|_H$ is a W-measurably sensitive action.
\end{Theorem}

\begin{proof}

Suppose that $T|_H$ is not W-measurably sensitive. By the classification theorem, Theorem \ref{classif}, there is some metric $d$ for which  $T|_H$ acts by invertible isometries with respect to $d$. Let $\delta$ be the sensitivity constant for $T$, relative to the metric $d$. Finally choose a finite subset $F\subset G$ as in the definition of syndeticity.

By W-measurable sensitivity of the $G$-action we have for every $x\in X$ and almost every $y\in X$, there is $g\in G$ with $d(T_gx,T_gy)>\delta.$  Also, since $H$ is syndetic, then there is $f'\in F$ so that $g+f'\in H$. Applying once again the definition of syndeticity to the element $f'$ shows that there is $f\in F$ with $f+f'\in H$. Now, since $H$ acts by isometries and $f+f', g+f'\in H$, we conclude
$$d(T_gx,T_gy) = d(T_f T_{f'} T_gx,T_f T_{f'} T_gy) = d(T_fx,T_f y).$$
Hence, by W-measurable sensitivity and $H$ being syndetic, we have that for every $x\in X$ and almost every $y\in X$ there is an $f=f(g)\in F$ where $d(T_fx,T_fy)>\delta.$

Now, let us cover $X$ by a countable, measurable, pairwise disjoint cover $(B_i)_{i\in\IN}$ where $\text{diam}(B_i)<\delta/2$ for every $i.$ For every $f \in F$, we can define the function $\varphi_f : X \rightarrow \IN$ that sends a point $x \in X$ to the index of the element of the cover $\{B_i\}$ that the point $T_f x$ is in. We can also define $\varphi : X \rightarrow \IN^F$ by $\varphi(x) = (\varphi_f(x))_{f \in F}$. 

Since $\IN^F$ is countable, there exists an $m \in \IN^F$ such that $\varphi^{-1}(m)$ has positive measure. Let $x\in \varphi^{-1}(m)$. Then, for almost every $y$ we know by W-measurable sensitivity that 
$d(T_{f}x,T_{f}y)>\delta$ which implies that $\varphi_{f}(x)$ is not equal to $\varphi_{f}(y)$. Hence, $y\notin \varphi^{-1}(m)$. Therefore, $\varphi^{-1}(m)$ has $0$ measure, which is a contradiction. 
\end{proof}

\begin{Example}

\item Suppose $(X,\mu,\IN,T)$ is an ergodic measure-preserving system on a probability space. If it exhibits W-measurable sensitivity, then for any $k \in \IN$, if $T^k$ is ergodic then $T^k$ also exhibits W-measurable sensitivity.   To see this take $H=k\IN$ and $F=\{0,1,...,k-1\}$. In particular, any totally ergodic measure-preserving system on a probability space that exhibits W-measurable sensitivity has that all of its powers are W-measurably sensitive.

Suppose $(X,\mu,\IN^d,T)$ exhibits W-measurable sensitivity. Let ($v_1$, $\ldots$, $v_d) \in \IN^d$ be linearly independent, and let $H = \IN$-span$\{v_1, \ldots, v_d \}$.  If $T|_H$ is ergodic then $T|_H$ also exhibits W-measurable sensitivity.

\end{Example}

\subsection{Thick sub-semigroups}\label{thick sub-semigroups}
 In this section we consider another class of sub-semigroups that preserve W-measurable sensitivity under a restriction.

\begin{Definition}

 A sub-semigroup $H$ of $G$ is called \textit{thick in $G$} if for all finite sets $F\subset G$, there is a $p\in G$ such that $F+p :=\{f+p:f\in F\}\subset H$.

\end{Definition}

\begin{Example} Let $H$ be a \emph{cone} in $\IN^2$. That is, the set of elements in between two rays anchored at the origin in $\IN^2$. Then $H$ is a thick sub-semigroup in $\IN^2$. This generalizes to cones in $\IN^d$ by replacing two rays anchored at the origin to $d$-rays. See Figure  \ref{fig:cones}.

\begin{center}

\begin{tikzpicture}

\node[red][above] at (-2,.3) {$H$};
\node[red] at (-3, 0) {\textbullet};
\draw[black] (-3,0) [->] to (-3,3);
\draw[red] (-3,0) [->] to (0,3);
\draw[red] (-3,0) [->] to (0,1);
\draw[black] (-3,0) [->] to (0,0);

\end{tikzpicture}

\captionof{figure}{A cone in $\IN^2$.} \label{fig:cones}

\end{center}

\end{Example}

\begin{Remark}
Note that $H=k\IN$ is syndetic, but not thick. On the other hand, a cone in $\IN^2$ with rays at angles 0 and $\pi/4$ is thick, but not syndetic. Hence, the two notions are distinct and in order to show that the restriction of a  W-measurably sensitive action onto such subsets remains W-measurably sensitive may require different techniques.
\end{Remark}

\begin{Theorem}\label{thick}
Let $(X,\mu,G,T)$ be a nonsingular $G$-conservative action that is W-measurably sensitive. If $H$ is a sub-semigroup of $G$ which is thick in $G$ and $T|_H$ is $H$-conservative and ergodic, then $T|_H$ is a W-measurably sensitive action.
\end{Theorem}

\begin{proof} 
Suppose that $T|_H$ is not W-measurably sensitive. By the classification theorem, Theorem \ref{classif}, there is some metric $d$ for which  $T|_H$ acts by invertible isometries with respect to $d$. Let $\delta$ be the sensitivity constant for $T$, relative to the metric $d$.

 Let $x\in X$ and $\varepsilon = \delta.$ Since $T$ is W-measurably sensitive, then there is $y\in X$ and $g\in G$ with the property that $d(x,y)<\varepsilon$ and $d(T_gx,T_gy)>\delta$. 

Since $H$ is thick in $G$ then there is some $p\in G$ with $g+p\in H$. Using once more that $H$ is thick in $G$ on the set $\{g+p,p\}$ we get that there is $q$ with $g+p+q=:h$ and $p+q$ in $H$. Finally, we use that $H$ acts by isometries to deduce that
\begin{align*}
\delta&<d(T_gx,T_gy)\\
&= d(T_{p+q}T_gx,T_{p+q}T_gy)\\
&= d(T_hx,T_hy)\\
&=d(x,y)<\varepsilon = \delta,
\end{align*}
a contradiction.
\end{proof}

\section{Closing Remarks and Further Questions}\label{Closing}

We close the paper with a few remarks. There have been some recent results concerning {\it sensitive dependence on initial conditions} which is the topological notion that motivated W-measurable sensitivity.  In particular, there have been results about sensitive dependence for semigroup actions such as \cite{MR2367162} and \cite{MR2885999}. The results in these papers are purely topological while we focus on the measurable aspects of sensitivity. Moreover, the semigroup actions in these papers are by a topological semigroup that is also a {\it C-semigroup}. A semigroup $G$ is called a C-semigroup if for every $g\in G$, the closure of $G\setminus\{hg|h\in G\}$ is compact in $G$. We make no topological assumptions on our semigroups. Furthermore, the condition of being a C-semigroup excludes semigroups such as $\IN^d$ or $\IQ^d _{\ge0}$ for $d\ge2$, which our results allow. However, the family of C-semigroups includes one-parameter semigroups, which our results do not. We thus have the following question:

\begin{Question}

To what class of semigroups can we extend W-measurable sensitivity and the classification theorem, Theorem \ref{classif}?

\end{Question}

Restriction of a W-measurably sensitive semigroup action is not automatically W-measurably sensitive since separation of points only needs to occur at a single group element by Remark 4.2.  The last section gave some examples where the restriction of a W-measurably sensitive action to sub-semigroups continued to be W-measurably sensitive provided the sub-semigroup was ``large". There are other notions of ``largeness" for semigroups such as being piecewise syndetic, central, an IP set, etc. We refer the reader to \cite{Berg} for definitions.   It would be interesting to understand which of these notions of ``largeness" would preserve W-measurable sensitivity under restriction. For example, a generalization of Theorem \ref{syn} could be:

\begin{Question}

Let $(X,\mu,G,T)$ be a nonsingular G-conservative action that is W-measurably sensitive. If $H$ is a sub-semigroup of $G$ which is piecewise syndetic in $G$  and $T|_H$ is $H$-conservative and ergodic, then is $T|_H$ a W-measurably sensitive action?

\end{Question}

\subsection{Acknowledgments}This research was initiated by the Ergodic Theory group of the the 2013 SMALL Undergraduate Research program at Williams College with support provided by the National Science Foundation REU Grant DMS-0850577 and the Science Center of Williams College. Both JW and AS were supported by the National Science Foundation Graduate Research Fellowship under Grants No. 1256082 and  No. 1122374, respectively. Parts of this research also formed part of the senior research thesis of AS while a student at Arizona State University. 

Since August 2019, CS has been serving as a Program Director in the Division of Mathematical Sciences at the National Science Foundation (NSF), USA, and as a component of this job, he received support from NSF for research, which included work on this paper. Any opinions, findings, and conclusions or recommendations expressed in this material are those of the authors and do not necessarily reflect the views of the National Science Foundation.

We are indebted to the referee for a careful reading of the manuscript and several comments and suggestions that improved our paper. 
\bibliographystyle{plain}

\end{document}